\title[A footnote to a paper of Deodhar]
{A footnote to a paper of Deodhar}
\author{Davide Franco }
\address{Universit\`a di Napoli
\lq\lq Federico II\rq\rq, Dipartimento di Matematica e
Applicazioni \lq\lq R. Caccioppoli\rq\rq, Via Cintia, 80126
Napoli, Italy.} \email{davide.franco@unina.it}
\theoremstyle{plain}
\newtheorem{theorem}{Theorem}[section]
\newtheorem{corollary}[theorem]{Corollary}
\newtheorem{proposition}[theorem]{Proposition}
\theoremstyle{definition}
\newtheorem{remark}[theorem]{Remark}
\newtheorem{notations}[theorem]{Notations}
\newtheorem{definition}[theorem]{Definition}
\newcommand{\uc}{\ensuremath{\mathcal{U}}}
\newcommand{\dc}{\ensuremath{\mathcal{D}}}
\newcommand{\hc}{\ensuremath{\mathcal{H}}}
\newcommand{\lc}{\ensuremath{\mathcal{L}}}
\newcommand{\bC}{\mathbb{C}} 
\newcommand{\Z}{\mathbb{Z}}
\newcommand{\Zq}{\mathbb{Z}[q^{\frac{1}{2}}, q^{-\frac{1}{2}}]}
\begin{document}

\begin{abstract} 

Let $X\subseteq G\slash B$ be a Schubert variety in a flag manifold and let $\pi: \tilde X \rightarrow X$ be a Bott-Samelson resolution of $X$.
In this paper we prove  an effective version of  the decomposition theorem for the  derived pushforward $R \pi_{*} \mathbb{Q}_{\tilde{X}}$.
As a by-product,  we obtain recursive procedure to extract Kazhdan-Lusztig polynomials  from   the polynomials introduced by V. Deodhar in \cite{Deo}, which does not require prior knowledge of a minimal set.
We also observe that any family of equivariant resolutions of Schubert varieties allows to define a new basis in the Hecke algebra and we show a way to compute the transition matrix, from the   Kazhdan-Lusztig basis to the new one.

\bigskip\noindent {\it{Keywords}}:   
Kazhdan-Lusztig polynomials, Intersection cohomology, Decomposition theorem, Schubert varieties, Bott-Samelson resolution, Hecke algebra.

\medskip\noindent {\it{MSC2010}}\,:  Primary 14B05, 14M15; Secondary 14E15,  14F45,  32S20, 32S60, 58K15.
\end{abstract}

\maketitle

\section{Introduction}

As the title suggests, this work is a sort of appendix to  \cite{Deo}. In such a paper,  Vinay Deodhar introduces a statistic, called \textit{defect}, on the subexpressions
of a given reduced expression of an element of a Coxeter group $W$ (see also \cite[Section 6.3.17]{BillLak}). Specifically, 
let $w\in W$ be an element of length $l(w)$ and let
$$w=s_1 \dots s_l, \quad l=l(w)$$
be a reduced expression of $w$.
A \textit{subexpression} $\sigma=(\sigma_0, \dots , \sigma_l)$ is a sequence of Coxeter group elements such that
$$\sigma_{j-1}^{-1}\sigma_j \in \{id, s_j\}, \quad \text{for all} \quad 1\leq j \leq l. $$
Let $\mathcal{S}$ be the set such sequences for the given reduced expression and let
$$\pi(\sigma):= \sigma_l, \quad \text{if} \quad \sigma=(\sigma_0, \dots , \sigma_l)\in\mathcal{S}. $$
For any subexpression $\sigma=(\sigma_0, \dots , \sigma_l)\in\mathcal{S}$, Deodhar defines the \textit{defect} of $\sigma$ by
$$ d(\sigma):= \# \{1\leq j \leq l \mid \sigma_{j-1}^{-1}s_j < \sigma _{j-1} \}.$$
If one  fix a reduced expression for all $w\in W$ and consider $v\in W$ such that $v\leq w$ in the Bruhat order, then one can use the defect to define the following polynomial
\begin{equation}
\label{defdeo}
Q_{w, v}:= \sum _{\sigma \in \mathcal S, \, \pi(\sigma)=v}q^{d(\sigma)} \in \Z[q].
\end{equation}

In \cite{Deo}, Deodhar proves that the Kazhdan-Lusztig polynomial $P_{w, v}$ admit a description  as subsum of \eqref{defdeo}. More precisely, he gives a recursive algorithm for computing a minimal set $E_{min}\subseteq \mathcal S$ such that
\begin{equation}
\label{defKL}
P_{w, v}:= \sum _{\sigma \in E_{min}, \, \pi(\sigma)=v}q^{d(\sigma)} \in \Z[q],
\end{equation}
for any pair $w, v\in W$, $v\leq w$.
What is more, in \cite{Deo} one can find a new basis of the Hecke algebra of $W$ that is defined starting from the polynomials \eqref{defdeo}.

The main aim of this paper is to give a recursive procedure to extract Kazhdan-Lusztig polynomials $P_{w, \, v}$ from  polynomials $Q_{w, \, v}$, without going through the computation of the minimal set $E_{min}\subseteq \mathcal S$, in the case  $W$ is the Weyl group of a semisimple connected algebraic group over  $\bC$.
Instead, our approach is based on an effective version of the Beilinson-Bernstein-Deligne-Gabber decomposition theorem (BBDG for short) for the Bott-Samelson resolution. As a by-product of our analysis, we provide a recursive procedure to compute  the change-of-basis matrix, from the   Kazhdan-Lusztig basis of the Hecke algebra to the basis defined in \cite{Deo}.
 
The starting point of our analysis is Proposition 3.9 of
 \cite{Deo},  where the author shows that, when $W$ is the Weyl group of a semisimple connected algebraic group,  the polynomial \eqref{defdeo} has a nice geometric interpretation as Poincar\'e polynomial of a suitable fiber of the \textit{Bott-Samelson resolution}  of the Schubert variety $X(w)$ (see section (2) for a short review of some standard definitions and notations concerning Schubert varieties). More precisely, if we denote by $G$ an algebraic group with Weyl group $W$ and Borel subgroup $B$,  then  to the chosen reduced expression
$w=s_1 \dots s_l$ it is also associated the Bott-Samelson resolution
$$\pi_{w}: \tilde X (w) \rightarrow X(w).$$
The smooth variety $\tilde X (w)$ is defined as the subvariety of $(G\slash B)^l$ consisting of $l$-tuples
$(g_1B, \dots, g_lB)$ such that
$$ g_{i-1}^{-1}g_i\in \overline{Bs_iB}, \quad 1\leq i \leq l\quad (\text{by convention}, \,\,g_0=1),$$
and $\pi_w$ is the projection on the last factor. 

The polynomial \eqref{defdeo} is the Poincar\'e polynomial of the fiber of $\pi_w$ over the cell $\Omega_v\subset X(w)$ associated to $v$:
\begin{equation}\label{defQintro}
Q_{w, \, v}= \sum _i \dim H^{2i}(\pi_w^{-1}(x))q^i, \quad \forall x\in \Omega (v).
\end{equation}

Our approach for extracting the Kazhdan-Lusztig polynomials  from  the polynomials  defined in \eqref{defdeo}, is to prove an effective version of the  BBDG decomposition theorem for the Bott-Samelson resolution and, more generally, for any  equivariant resolution of a Schubert variety (in \cite{dGeFr2019}, \cite{Forum},  and \cite{CiFrSe}  partial results in this direction were previously obtained).
Specifically, let  $D_{c}^{b}(X)$ be the derived category of bounded complexes of constructible $\mathbb{Q}$-vector sheaves on a Schubert variety $X\subseteq G\slash B$.  The  decomposition theorem applied to an equivariant  resolution 
$\pi:\tilde X \rightarrow X$,
 states that 
the derived direct image
$R \pi_{*} \mathbb{Q}_{\tilde{X}} [ \dim X ]$  splits in $D_{c}^{b}(X)$  as a direct sum of shifts of irreducible perverse sheaves on $X$.
By \cite[\textsection~1.5]{dCaMi2009}, we have  a non-canonical decomposition
\begin{equation}\label{decThmintr}
	R \pi_{*} \mathbb{Q}_{\tilde{X}} [ \dim X] \cong \bigoplus_{i \in \mathbb{Z}} \bigoplus_{j \in \mathbb{N}} IC(L_{ij}) [-i],
\end{equation}
where the summands are shifted intersection cohomology complexes of the semisimple local systems $L_{ij}$, each of which is supported on a suitable locally closed stratum of codimension $j$, usually called a \textit{support} of the decomposition. The summand supported in the general point is precisely the intersection cohomology of $X$. The supports appearing in the splitting~\eqref{decThmintr} and the local systems $L_{ij}$ are, generally, rather mysterious objects when $j \geq 1$.

Quite luckily, in our case a crucial simplification arises because all the local systems $L_{ij}$ appearing in the decomposition (\ref{decThmintr}) are trivial by an easy argument that is explained in Proposition \ref{WK}.
As a consequence, we have 
\begin{equation}\label{DTintro}
	R \pi_{*} \mathbb{Q}_{\tilde{X}} [ \dim X] \cong \bigoplus_{X(v)\subseteq X} \bigoplus_{\alpha \in \mathbb{Z}} IC_{X(v)}^{\oplus s_{v, \alpha}},
\end{equation}
for suitable multiplicities $s_{v, \alpha}$.
Since we have a splitting like \eqref{DTintro} for any equivariant resolution $\pi_w: \tilde{X}(w) \rightarrow X(w)$, we 
can define  a Laurent polynomial recording the contribution to the decomposition theorem  for $\pi_w$, with support $X(v)$:
\begin{equation*}
		D_{w,  v} (t) := \sum_{\alpha \in \mathbb{Z}} s_{v, \alpha}^{w} \cdot t^{\alpha}\in \Z[t, t^{-1}],
	\end{equation*}
for avery pair $(v,w)$ in $W$, such that $v\leq w$, and for every resolution.

Now, assume to have fixed an equivariant resolution $\pi_w: \tilde{X}(w) \rightarrow X(w)$ for every Schubert variety $X(w)$ and assume that the cohomology of the fibers of $\pi_w$ vanish in odd degrees (this property is satisfied by any reasonable resolution of Schubert varieties).
Similarly as in \eqref{defQintro}, define the analogue Deodhar's polynomial $Q_{w,  v}$ as the Poincar\'e polynomial 
of the fibers of $\pi_w$ over the cell $\Omega(v)$.
The main results contained in this paper can be summarized as follows:

a) we set up an iterative procedure  that allows to compute  \textbf{both} the Kazhdan-Lusztig polynomials $P_{w, v}$ and the Laurent polynomials $D_{w,  v} $ starting from  Deodhar's polynomials $Q_{w,  v}$;

b) we observe that the polynomials  $Q_{w,  v}$ allow to construct a new basis $\{B_w \mid  w\in W\}$ of the Hecke algebra;

c) we prove that the transition matrix, from the   Kazhdan-Lusztig basis to the new basis $\{B_w \mid  w\in W\}$, can be easily deduced from the Laurent polynomials $D_{w,  v}$ \textit{and does not require prior knowledge of the transition matrix from the Kazhdan-Lusztig basis to the standard one}
(compare with Remark \ref{last}).

\bigskip
\section{Notations and basic facts}

\medskip

In this section we review some basic facts concerning  Buhat decomposition, Schubert varieties and combinatorics of subexpressions that are needed in the following.
\medskip
\par\noindent
(i) Let $G$ be a semisimple connected algebraic group over  $\bC$.
Let $T$ and $B$ be a \textit{maximal torus} and a \textit{Borel  subgroup} of $G$, respectively. Denote by $W$ be the \textit{Weyl group} of $G$.
If we consider
$$\{e_w \mid \,\, w\in W\}\subset G\slash B,$$ 
the set of fixed points for the torus action on $G\slash B$,
then we have the \textit{Bruhat decomposition} of $G\slash B$ i.e. the disjoint union of \textit{Bruhat cells}
$$G\slash B=  \bigsqcup_{w\in W}\Omega(w), \quad \Omega(w):=Be_w.$$
For every $w\in W$ the \textit{Schubert variety} associated to $w$  is defined as the Zariski closure of the corresponding Bruhat cell:
$$X(w):= \overline{\Omega(w)}.$$
(ii) There is a partial order on the Weyl group $W$ determined by the decomposition above. Specifically, for $w_1, w_2\in W$ we have
$$w_1\geq w_2 \quad \Leftrightarrow \quad X(w_1) \supseteq X(w_2).$$ 
Furthermore,  we have
$$X(w)=  \bigcup_{v \leq w}\Omega(v).$$

\medskip
We borrow from Deodhar's paper \cite{Deo} some crucial definitions concerning the combinatorics of subexpressions of a reduced word in a Coxeter group.

\begin{definition} \cite[Def. 2.1 - 2.2]{Deo}
\begin{enumerate}
\item Let $w\in W$ be an element of length $l(w)$ and let
$$w=s_1 \dots s_l, \quad l=l(w)$$
be  a reduced expression of $w$.
A \textit{subexpression} $\sigma=(\sigma_0, \dots , \sigma_l)$ is a sequence of Weyl group elements such that $\sigma_0=id$ and
$$\sigma_{j-1}^{-1}\sigma_j \in \{id, s_j\}, \quad \text{for all} \quad 1\leq j \leq l. $$
For any reduced word $r$, let $\mathcal{S} _r$ be the set of reduced expressions of $r$ and let
$$\pi(\sigma):= \sigma_l, \quad \text{if} \quad \sigma=(\sigma_0, \dots , \sigma_l)\in\mathcal{S} _r. $$
\item For any $\sigma=(\sigma_0, \dots , \sigma_l)\in\mathcal{S} _r$ define the \textit{defect} of $\sigma$ by
$$ d(\sigma):= \# \{1\leq j \leq l \mid \sigma_{j-1}^{-1}s_j < \sigma _{j-1} \}.$$
\end{enumerate}
\end{definition}
\medskip
\par\noindent
If $v\leq w$, we let
\begin{equation}
\label{defQ}
Q_{w, v}:= \sum _{\sigma \in \mathcal S_r, \,\pi(\sigma)=v}q^{d(\sigma)} \in \Z[q].
\end{equation}
\medskip
\textit{From now on we assume that we have chosen a reduced expression for all $w\in W$}, hence \eqref{defQ} provides a polynomial $Q_{w, v}\in \Z[q]$ for any pair $(w,v)$ such that $v\leq w$.
\medskip

As explained in \cite[Proposition 3.9]{Deo}, the polynomial above has a nice geometric interpretation as Poincar\'e polynomial of the \textit{Bott-Samelson resolution}. We recall that to each reduced expression
$w=s_1 \dots s_l,$ it is also associated the \textit{Bott-Samelson resolution}
$$\pi_{w}: \tilde X (w) \rightarrow X(w).$$
The smooth variety $\tilde X (w)$ is defined as the subvariety of $(G\slash B)^l$ consisting of $l$-tuples
$(g_1B, \dots, g_lB)$ such that
$$ g_{i-1}^{-1}g_i\in \overline{Bs_iB}, \quad 1\leq i \leq l\quad (\text{by convention}, \,\,g_0=1),$$
and $\pi_w$ is the projection on the last factor. It is clear that $\pi_w$ is equivariant under the action of the Borel subgroup $B$.

By  \cite[Proposition 3.9]{Deo}, \eqref{defQ} is the Poincar\'e polynomial of the fiber of $\pi_w$ over the cell $\Omega(v)\subset X(w)$:
\begin{equation}
\label{defQPoi}
Q_{w, v}= \sum _i \dim H^{2i}(\pi_{w}^{-1}(x))q^i, \quad \forall x\in \Omega(v).
\end{equation}

\bigskip
\bigskip

\section{The Decomposition theorem}

\medskip

Formula \eqref{defQPoi} and Deodhar's paper suggest that there should be a closed relationship between the Poincar\'e polynomials of the stalk cohomology of the  complex $R \pi_{*} \mathbb{Q}_{\tilde{X}(w)}$ and the Kazhdan-Lusztig polynomials.
The most important result concerning the complex $R \pi_{*} \mathbb{Q}_{\tilde{X}(w)}$  and, in general, concerning the topology of proper algebraic  map is the \textit{decomposition theorem} of Beilinson, Bernstein, Deligne and Gabber,
which we now recall.

\bigskip
In what follows, we shall work  cohomology with $\mathbb{Q}$-coefficients  and the  self-dual perversity $\mathfrak{p}$ (see \cite[\textsection 2.1]{BBD},  and \cite[p. 79]{GMP2}). 

\begin{theorem}(\textbf{Decomposition theorem} \textup{\cite[1.6.1]{dCaMi2009}})\label{ThDec}
	Let $f: X \rightarrow Y$ be a proper map of complex algebraic varieties.  In $D_{c}^{b}(Y)$, the derived category of bounded complexes of constructible $\mathbb{Q}$-vector sheaves on  $Y$, there is a non-canonical isomorphism
	\begin{equation}\label{Eq:DecTh01}
		Rf_{*} IC_{X} \cong \bigoplus_{\alpha \in \mathbb{Z}} \prescript{\mathfrak{p}}{}{\mathcal{H}}^{\alpha}(Rf_{*} IC_{X}) \left[ - \alpha \right].
	\end{equation}
	Furthermore, the perverse sheaves $\prescript{\mathfrak{p}}{}{\mathcal{H}}^{\alpha}(Rf_{*} IC_{X})$ are semisimple; i.e.~there is a decomposition into finitely many disjoint locally closed and nonsingular subvarieties $Y = \coprod S_{\beta}$ and a canonical decomposition into a direct sum of intersection complexes of semisimple local systems
	\begin{equation}\label{Eq:DecTh02}
		\prescript{\mathfrak{p}}{}{\mathcal{H}}^{\alpha}(Rf_{*} IC_{X}) \cong \bigoplus_{\beta} IC_{\overline{S_{\beta}}}(L_{\alpha, S_{\beta}}).
	\end{equation}
\end{theorem}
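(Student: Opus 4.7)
The plan is to follow the Hodge-theoretic approach of de Cataldo–Migliorini in \cite{dCaMi2009}. The statement being local on $Y$, I can assume $f$ is projective, and by induction together with a resolution of $X$, I reduce to the case of the constant sheaf $\mathbb{Q}_X[\dim X]$ on a smooth projective variety, which is where classical Hodge theory can be brought directly to bear.

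Next I would establish the relative hard Lefschetz theorem: for a relatively ample class $\eta$ on $X$, iterated cup product
\[\eta^\alpha: {}^{\mathfrak p}\mathcal{H}^{-\alpha}(Rf_* \mathbb{Q}_X[\dim X]) \xrightarrow{\sim} {}^{\mathfrak p}\mathcal{H}^\alpha(Rf_* \mathbb{Q}_X[\dim X])\]
is an isomorphism for every $\alpha \geq 0$. Once this is in hand, a formal Deligne-style argument shows that the resulting $\mathfrak{sl}_2$-action on the perverse cohomology sheaves forces the splitting \eqref{Eq:DecTh01} into a direct sum of shifts of the $\prescript{\mathfrak{p}}{}{\mathcal{H}}^{\alpha}(Rf_{*} IC_{X})$.

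The remaining and deepest part is the semisimplicity statement \eqref{Eq:DecTh02}. The approach is to construct polarizations on the perverse cohomology sheaves by induction on the dimension of the support, using the primitive decomposition associated with a relative hyperplane section together with the Hodge–Riemann bilinear relations. Once each $\prescript{\mathfrak{p}}{}{\mathcal{H}}^{\alpha}(Rf_{*} IC_{X})$ is shown to underlie a polarized pure object, semisimplicity is automatic — a polarization rules out non-split self-extensions — and the decomposition into summands $IC_{\overline{S_{\beta}}}(L_{\alpha, S_{\beta}})$ follows by splitting into isotypic components of the monodromy on each stratum of a suitable stratification $Y = \coprod S_\beta$ adapted to $f$.

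The main obstacle is precisely this last step. The splitting \eqref{Eq:DecTh01} follows formally once relative hard Lefschetz and semisimplicity are available, but semisimplicity itself requires genuinely non-formal input — either purity of Weil sheaves via reduction to finite characteristic (the original Beilinson–Bernstein–Deligne–Gabber strategy), Saito's theory of pure Hodge modules, or the polarizations built explicitly in \cite{dCaMi2009} via the weak Lefschetz theorem and induction on the defect of semismallness. For the applications of the present paper, however, only the very special case of an equivariant resolution of a Schubert variety is needed, where the triviality of the local systems $L_{\alpha, S_{\beta}}$ (Proposition \ref{WK}) dramatically simplifies the picture.
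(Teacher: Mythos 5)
This statement is not proved in the paper at all: it is quoted as a black box from the literature (\cite[1.6.1]{dCaMi2009}, going back to \cite{BBD}), and the paper's own contribution only begins afterwards, with Proposition \ref{WK}, where the general theorem is specialized to equivariant resolutions of Schubert varieties. So the relevant comparison is between your sketch and the published proofs, not anything in this paper.

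As a summary of the de Cataldo--Migliorini Hodge-theoretic strategy your outline has the right headlines, but it is not a proof: the two steps you say you would ``establish'' --- the relative hard Lefschetz isomorphisms $\eta^{\alpha}\colon {}^{\mathfrak p}\mathcal{H}^{-\alpha}(Rf_{*}\mathbb{Q}_{X}[\dim X])\to {}^{\mathfrak p}\mathcal{H}^{\alpha}(Rf_{*}\mathbb{Q}_{X}[\dim X])$ and the semisimplicity of the perverse cohomology sheaves --- are precisely the entire content of the theorem, and your proposal supplies no argument for either beyond naming the known methods (Deligne's formal splitting criterion, polarizations and the Hodge--Riemann relations, purity over finite fields, Saito's Hodge modules); you say so yourself in the last paragraph. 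The opening reductions are also stated too quickly: locality on $Y$ only lets you assume $Y$ affine, while passing from a proper $f$ to a projective one requires Chow's lemma plus an argument that the theorem for a modification implies it for $f$; and reducing from $IC_{X}$ (with $X$ possibly singular) to constant coefficients on a resolution $X'\to X$ uses that $IC_{X}$ is a direct summand of the pushforward of $\mathbb{Q}_{X'}[\dim X]$, which is itself an instance of the theorem being proved, so the induction must be organized as in \cite{dCaMi2009} (constant coefficients on nonsingular sources first, the general $IC$ case deduced afterwards) to avoid circularity. In the context of this paper the appropriate course is the one the author takes: cite the theorem and prove only the specialization actually used, namely the identification of the supports and the triviality of the local systems in Proposition \ref{WK}.
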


Combining \eqref{Eq:DecTh01} and \eqref{Eq:DecTh02} we have
\begin{equation}
	Rf_{*} IC_{X} \cong \bigoplus_{\alpha \in \mathbb{Z}} \prescript{\mathfrak{p}}{}{\mathcal{H}}^{\alpha}(Rf_{*} IC_{X}) [- \alpha] \cong \bigoplus_{\alpha \in \mathbb{Z}} \bigoplus_{\beta} IC_{\overline{S_{\beta}}}(L_{\alpha, S_{\beta}}) [- \alpha],
\end{equation}
which can be written in the form
\begin{equation*}
	Rf_{*} IC_{X} \cong \bigoplus_{\alpha \in \mathbb{Z}} \prescript{\mathfrak{p}}{}{\mathcal{H}}^{\alpha}(Rf_{*} IC_{X}) [- \alpha] \cong \bigoplus_{\alpha \in \mathbb{Z}} \bigoplus_{S} \prescript{\mathfrak{p}}{}{\mathcal{H}}^{\alpha}(Rf_{*} IC_{X})_{S} [- \alpha],
\end{equation*}
where $S$, called a \textbf{support} of $f$, is any $\overline{S_{\beta}}$ associated to a non-zero local system $L_{\alpha, S_{\beta}}$ (see \cite[Definition~9.3.41]{Max2019}).
In the literature
one can find different approaches to the Decomposition Theorem
(see \cite{BBD}, \cite{Sai1986}, \cite{dCaMi2009},   
\cite{Wil2017}), which is a very general result but also rather implicit.
On the other hand,  there are many special cases for which the Decomposition
Theorem admits a simplified and explicit approach. One of these is  the case of
varieties with isolated singularities. For instance, in the work  \cite{dGeFr2020}, a simplified approach to the Decomposition Theorem for varieties with isolated singularities is developed, in connection with the existence of a \textit{natural Gysin morphism}, as defined in \cite[Definition 2.3]{dGeFr2017OnTheExistence} (see also \cite{DGF2} for other applications of the Decomposition Theorem to the Noether-Lefschetz Theory).

\medskip

As remarked before, the Bott-Samelson resolution $$\pi_{w}: \tilde X (w) \rightarrow X(w),$$ is equivariant under the action of the Borel subgroup $B$, hence $\pi_{w}$ is stratified according to the Bruhat decomposition
\begin{equation*}
X(w) = \bigsqcup_{v\leq w} \Omega(v).
\end{equation*}
In this case,  the supports of the decomposition theorem applied to the resolution $\pi_w$ are the Schubert subvarieties
\begin{equation*}
	X(v)=\overline{\Omega}(v), \qquad v \leq w.
\end{equation*}

Furthermore, all local systems appearing in the decomposition theorem are trivial since the isotropy subgroup of each orbit $\Omega(v)$ is connected \cite[Remark 11.6.2]{HTT}. 

We include in  the following proposition a proof of these facts, although they are probably well-known, in the attempt of making the present paper reasonably self-contained and also because the simple argument is very close to the rest of the paper.

\begin{proposition}\label{WK}
Let $\pi: \tilde X \rightarrow X$ be an equivariant     resolution of a Schubert variety $X=X(w)$ of dimension $l$. In the derived category $D_{c}^{b}(X)$,
we have a splitting
\begin{equation*}
		R \pi _{*} (\mathbb{Q}_{\tilde{X}}) [l] =  \bigoplus_{v \leq w} \bigoplus_{\alpha \in \mathbb{Z}} IC_{X(v)}^{ \oplus s_{v, \alpha}} [- \alpha],
	\end{equation*}
	for suitable multiplicities $s_{v, \alpha}$.
In other words, 
the supports of the decomposition theorem applied to the resolution $\pi$ are the Schubert varieties
 contained in $X$
and all local systems  are trivial. 
\end{proposition}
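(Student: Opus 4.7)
The plan is to apply the general decomposition theorem to $\pi$ and then exploit its $B$-equivariance together with the explicit geometry of the Bruhat cells to restrict the admissible supports to Schubert subvarieties and to force all local systems to be trivial.

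First, I would apply Theorem \ref{ThDec} to $\pi$ and write, in $D_{c}^{b}(X)$,
\[
R\pi_{*}\mathbb{Q}_{\tilde X}[l] \cong \bigoplus_{\alpha \in \mathbb{Z}} \bigoplus_{\beta} IC_{\overline{S_\beta}}(L_{\alpha, S_\beta})[-\alpha].
\]
Since $\pi$ is $B$-equivariant by hypothesis, $R\pi_{*}\mathbb{Q}_{\tilde X}$ carries a natural $B$-equivariant structure, so each perverse cohomology sheaf $\prescript{\mathfrak p}{}{\mathcal H}^{\alpha}(R\pi_{*}\mathbb{Q}_{\tilde X})$ is a semisimple $B$-equivariant perverse sheaf on $X(w)$.

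Next, I would identify the admissible supports. The variety $X(w)$ has only finitely many $B$-orbits, namely the Bruhat cells $\Omega(v)$ with $v\leq w$, so every simple $B$-equivariant perverse sheaf on $X(w)$ has the form $IC_{X(v)}(L)$ for an irreducible $B$-equivariant local system $L$ on $\Omega(v)$. Decomposing each $\prescript{\mathfrak p}{}{\mathcal H}^{\alpha}$ into simples, all supports that can appear in the decomposition are Schubert subvarieties $X(v) \subseteq X(w)$.

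To finish, I would trivialize the local systems. Each cell $\Omega(v) \cong B/B_{v}$ has connected isotropy $B_{v}$ by \cite[Remark~11.6.2]{HTT}, so every $B$-equivariant local system on it is trivial; equivalently, $\Omega(v) \cong \mathbb{A}^{l(v)}$ is simply connected, which kills any local system irrespective of equivariance. Setting $s_{v,\alpha}$ equal to the multiplicity of $IC_{X(v)}$ in $\prescript{\mathfrak p}{}{\mathcal H}^{\alpha}$ then yields the stated formula. I do not anticipate a real obstacle; the only point that requires some care is the passage from the raw decomposition theorem to its $B$-equivariant refinement, which follows from the compatibility of perverse truncation with equivariant structures.
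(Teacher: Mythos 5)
Your argument is correct, but it follows a genuinely different route from the paper's proof. You work in the $B$-equivariant setting: equivariance of $\pi$ makes $R\pi_{*}\mathbb{Q}_{\tilde X}$ and its perverse cohomologies ($B$-)equivariant, hence constructible with respect to the Bruhat stratification, so every simple summand is $IC_{X(v)}(L)$ with $L$ a local system on a cell $\Omega(v)$; triviality of $L$ then follows either from connectedness of the isotropy groups (the \cite[Remark~11.6.2]{HTT} argument, which the paper itself mentions just before the proposition but deliberately does not use in the proof) or, as you note, simply from $\Omega(v)\cong\mathbb{A}^{l(v)}$ being simply connected. The paper instead gives a self-contained stratified induction: it restricts $R\pi_{*}\mathbb{Q}_{\tilde X}[l]$ to the decreasing open sets $\uc_r$ obtained by removing cells of length $\leq r$, and at each step uses proper base change, the constancy of stalk dimensions along a $B$-orbit (the only place equivariance enters), and \cite[Remark 1.5.1]{dCaMi2009} to see that the new summands restrict to shifted local systems on the union of $r$-dimensional cells, which are trivial because the cells are affine spaces. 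Your approach is shorter but leans on heavier machinery (the equivariant derived category, compatibility of perverse truncation with equivariant structures, and the classification of simple equivariant perverse sheaves on a variety with finitely many orbits); the paper's induction buys elementarity and self-containedness, and its bookkeeping of the summands appearing at each stratum is exactly what is reused in the proof of Theorem \ref{Th:Main}. The only point in your write-up deserving a word of justification is the passage from the non-equivariant BBDG decomposition to the statement that the simple summands themselves are constructible along the Bruhat stratification (equivalently, have $B$-stable supports); this is standard — a direct summand of a perverse sheaf constructible with respect to a fixed stratification is again so constructible — but it is the hinge of your argument and should be stated explicitly.
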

\begin{proof}
Let $l:=l(w)$, fix $r$ such that $-1\leq r\leq l$ and define the following decreasing sequence of open sets of $X$
$$\ \uc_r:= X(w)\backslash  \bigsqcup_{v\leq w, \, l(v)\leq r} \Omega(v). $$
Clearly  we have $\uc_l= \emptyset$ \and $\uc _{-1}=X$. We are going to prove, by decreasing induction on $r$, that 
\begin{equation}\label{ththeorem}
		R \pi _{*} (\mathbb{Q}_{\tilde{X}}) [l] \mid_{\uc _r}=  \bigoplus_{v \leq w,\, r<l(v)} \bigoplus_{\alpha \in \mathbb{Z}} IC_{X(v)}^{ \oplus s_{v, \alpha}} [- \alpha]\mid_{\uc _r},
	\end{equation}
for suitable multiplicities $s_{v, \alpha}$.

Since $\uc_{l-1}=\Omega:=\Omega (w)$ and since $\pi$ is an isomorphism over $\Omega$,  we have
$$R \pi _{*} \mathbb{Q}_{\tilde{X}} [l] \mid_{\Omega}\cong \mathbb{Q}_{\Omega} [l] ,$$ 
hence the \textit{base step} follows from the well known isomorphism
$$ \mathbb{Q}_{\Omega} [l] \cong IC_{X}\mid_{\Omega}$$ 
(compare e.g. with  \cite[Definition 6.3.1]{Max2019}).
\par\noindent
As for the \textit{inductive step}, let 
$$\dc:= \bigsqcup_{v \leq w,\, r=l(v)}\Omega(v).$$
 By induction, we have
$$R \pi_{*} (\mathbb{Q}_{\tilde{X}}) [l] \mid_{\uc _{r}}=  \bigoplus_{v \leq w,\, r<l(v)} \bigoplus_{\alpha \in \mathbb{Z}} IC_{X(v)}^{ \oplus s_{v, \alpha}} [- \alpha]\mid_{\uc _{r}},$$
hence we deduce
\begin{equation}\label{ththeoremL}
		R \pi_{*} (\mathbb{Q}_{\tilde{X}}) [l] \mid_{\uc _{r-1}}=  \lc\mid_{\dc} \oplus \bigoplus_{v \leq w,\, r<l(v)} \bigoplus_{\alpha \in \mathbb{Z}} IC_{X(v)}^{ \oplus s_{v, \alpha}} [- \alpha]\mid_{\uc _{r-1}},
	\end{equation}
where $\lc$ gathers all the summands supported in $\uc_r^*=\bigsqcup_{v \leq w,\, l(v)\leq r}\Omega(v)$
$$\lc:= \bigoplus_{\alpha \in \mathbb{Z}} \bigoplus_{S\subseteq \uc_r^*} \prescript{\mathfrak{p}}{}{\mathcal{H}}^{\alpha}(R \pi_{*} (\mathbb{Q}_{\tilde{X}}) [l] )_{S} [- \alpha].$$
In the previous formula, the summand $\prescript{\mathfrak{p}}{}{\mathcal{H}}^{\alpha}(R \pi_{*} (\mathbb{Q}_{\tilde{X}}) [l] )_{S}$ denotes the $S$ component of $\prescript{\mathfrak{p}}{}{\mathcal{H}}^{\alpha}(R \pi_{*} (\mathbb{Q}_{\tilde{X}}) [l] )$ in the decomposition by supports
\cite[Section 1.1]{DeC}.
 By proper base change we also have
\begin{equation}\label{ththeoremL+}
R \pi_{*} (\mathbb{Q}_{\tilde{X}}) [l] \mid_{\dc}=  \lc\mid_{\dc} \oplus \bigoplus_{v \leq w,\, r<l(v)} \bigoplus_{\alpha \in \mathbb{Z}} IC_{X(v)}^{ \oplus s_{v, \alpha}} [- \alpha]\mid_{\dc}.
\end{equation}
Since $\pi: \tilde X \rightarrow X$ is equivariant and $\dc$ is a disjoint union of $B$-orbits, the dimension of the cohomology stalk
$\hc^i R \pi_{*} (\mathbb{Q}_{\tilde{X}}) [l]_x$ is independent of $x\in \dc$, for all $i$. The same holds true also for all $IC_{X(v)} [- \alpha]\mid_{\dc}$.
Then \eqref{ththeoremL+} shows that the dimension of the cohomology stalk $\hc^i \lc_x$ is independent of $x\in \dc$, for all $i$. Thus $\lc\mid_{\dc}$ is
a direct sum of shifted local systems because,
by \cite[Remark 1.5.1]{dCaMi2009}, the perverse cohomology sheaves $\prescript{\mathfrak{p}}{}{\mathcal{H}}^{i}(\lc\mid_{\dc})$ concide, up to a shift, with the ordinary cohomology
$$\prescript{\mathfrak{p}}{}{\mathcal{H}}^{i}(\lc\mid_{\dc})\cong \hc^{i-r}\lc\mid_{\dc}[r],\quad \forall i.$$
We are done, because 
$\dc= \bigsqcup_{v \leq w,\, r=l(v)}\Omega(v)$ is a disjoint union of affine spaces of dimension $r$ (compare e.g. with \cite[Theorem 9.9.5 (i)]{HTT})  so any local system on $\dc$ is trivial and \eqref{ththeorem} follows for the restriction to the open set $\uc_{r-1}$.

\end{proof}

\bigskip

\section{A consequence of the decomposition theorem}

\medskip
In this section we assume to have fixed an equivariant resolution
$\pi_{w}: \tilde X (w) \rightarrow X(w),$ for any Schubert variety $X(w)$, $w\in W$.
As a consequence of Proposition \ref{WK}, the decomposition theorem for $\pi_w$ can be stated as
\begin{equation}\label{decEx}
		(R \pi_w) _{*} \mathbb{Q}_{\tilde{X}(w)} [l(w)] =  \bigoplus_{v \leq w} \bigoplus_{\alpha \in \mathbb{Z}} IC_{X(v)}^{ \oplus s_{v, \alpha}^{w}} [- \alpha],
	\end{equation}
for suitable multiplicities $s_{v, \alpha}^w$, where recall that $l(w) = \dim X(w)$.

\begin{notations}
	For any pair $(v,w)$ of permutations such that $v \leq w$, let
	\begin{equation}\label{defD}
		D_{w, v} (t) := \sum_{\alpha \in \mathbb{Z}} s_{v, \alpha}^{w} \cdot t^{\alpha} \in \Z[t, t^{-1}]
	\end{equation}
	be the Laurent polynomial recording  the contribution to the decomposition theorem \eqref{decEx} coming from support $X(v)$.
Let moreover 
\begin{equation}\label{defF}
		F_{w, v} (t) := \sum_{\alpha \in \mathbb{Z}} f_{w, v}^{l(w)+\alpha} t^{\alpha} \in \Z[t, t^{-1}], \quad f_{w, v}^{i}  := \dim H^{i}(\pi_w^{-1}(x)), \,\,\,x\in \Omega(v),
	\end{equation}
	be the shifted Poincar\'e polynomial of the fibers of $\pi_w$ over $\Omega(v)$.
\end{notations}

\begin{remark}
	Let $X(w)$ be a Schubert variety and let $\Omega(v)\subseteq X(w)$ be a Schubert cell. It is well known the dimensions of the stalks $\mathcal{H}^{\alpha}(IC_{X(w)})_{x}$ do not depend on $x\in \Omega (v)$. 
	
The Laurent polynomial encoding the dimensions of the stalks $\mathcal{H}^{\alpha}(IC_{\mathcal{S}_{\tau}}^{\bullet})_{x}$ is the shifted Kazhdan-Lusztig polynomial:
	\begin{equation*}
		H_{w, v}(t) := \sum_{\alpha \in \mathbb{Z}} h_{w, v}^{\alpha} t^{\alpha}, \quad h_{w, v}^{\alpha}  := \dim \mathcal{H}^{\alpha}(IC_{X(w)})_{x}, \,\,\, x \in \Omega (v).
	\end{equation*}
Recall that we have	
\begin{equation}\label{KvsH}
		P_{w, v}(q)= q^{\frac{l(w)}{2}}H_{w, v}(\sqrt q), 
	\end{equation}
where $P_{w, v}(q)$ is the corresponding Kazhdan-Lusztig polynomial \eqref{defKL} (compare e.g. with \cite[Theorem 6.1.11]{BillLak}).
\end{remark}
Before stating the main result of this section, let us introduce the truncation $U$ and symmetrizing $S$ operators:
\begin{align*}
	&U: \sum_{\alpha \in \mathbb{Z}} a_{\alpha} t^{\alpha} \in \mathbb{Z}[t, t^{-1}] \mapsto \sum_{\alpha \geq 0} a_{\alpha} t^{\alpha}\in \mathbb{Z}[t];\\
	&S: \sum_{\alpha \geq 0} a_{\alpha} t^{\alpha}\in \mathbb{Z}[t] \mapsto a_{0} + \sum_{\alpha > 0} a_{\alpha} (t^{\alpha} + t^{- \alpha}) \in \mathbb{Z}[t, t^{-1}].
\end{align*}

\begin{theorem}\label{Th:Main}
	With notations as above, let $u\leq w$ in $W$. Then we have the following  recursive formulae  for the computation of the Laurent polynomials $D_{w, u}$ and the shifted Kazhdan-Lusztig polynomials~$H_{w, u}$:
	\begin{equation*}
		\begin{cases}
			D_{w, u} = S \circ U(R_{w, u})\\
			H_{w, u} = t^{-l(u)}(R_{w, u} - D_{w, u})
		\end{cases}
	\end{equation*}
	where
	\begin{equation*}
		R_{w, u} := t^{l(u)} \left( F_{w, u} - \sum_{u < v < w} D_{w, v} \cdot H_{v, u} \right).
	\end{equation*}
\end{theorem}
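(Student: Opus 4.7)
The plan is to extract both recursive identities from a single master identity coming from the decomposition theorem~\eqref{decEx}, and then to separate the two unknowns $D_{w,u}$ and $H_{w,u}$ using the degree and symmetry structure of intersection cohomology complexes.

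First, fix $x \in \Omega(u)$ and take stalks on both sides of \eqref{decEx}. By proper base change, the stalk cohomology of $(R\pi_w)_*\mathbb{Q}_{\tilde X(w)}[l(w)]$ at $x$ is $H^{*+l(w)}(\pi_w^{-1}(x))$, whose Laurent polynomial is exactly $F_{w,u}(t)$; on the right-hand side, each summand $IC_{X(v)}^{\oplus s_{v,\alpha}^w}[-\alpha]$ contributes $s_{v,\alpha}^w \cdot t^\alpha H_{v,u}(t)$. Matching produces the master identity
$$F_{w,u}(t) = \sum_{v \leq w} D_{w,v}(t) \cdot H_{v,u}(t).$$
Next, isolate the two extremal terms. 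The $v=w$ summand contributes $D_{w,w} H_{w,u} = H_{w,u}$, because $\pi_w$ is an isomorphism over $\Omega(w)$ and so $D_{w,w} = 1$. The $v=u$ summand contributes $D_{w,u} H_{u,u} = t^{-l(u)} D_{w,u}$, because $IC_{X(u)}|_{\Omega(u)} = \mathbb{Q}_{\Omega(u)}[l(u)]$ gives $H_{u,u}(t) = t^{-l(u)}$. Assuming $u < w$, moving the intermediate terms to the left and multiplying by $t^{l(u)}$ yields
$$R_{w,u}(t) = t^{l(u)} H_{w,u}(t) + D_{w,u}(t).$$

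It remains to separate the two summands on the right. The support axiom for $IC_{X(w)}$ forces $h_{w,u}^\alpha = 0$ unless $-l(w) \leq \alpha \leq -l(u)-1$, so $t^{l(u)} H_{w,u}(t)$ is a polynomial in $t^{-1}$ with no constant term. On the other hand, relative Hard Lefschetz applied to the projective morphism $\pi_w$ gives isomorphisms between the perverse cohomology sheaves in opposite degrees, so $s_{v,\alpha}^w = s_{v,-\alpha}^w$ and $D_{w,u}(t) = D_{w,u}(t^{-1})$ is palindromic. Applying $U$ to the identity of the previous paragraph kills the $t^{l(u)} H_{w,u}$ term, so $U(R_{w,u}) = U(D_{w,u})$; applying $S$ and using the palindromic symmetry of $D_{w,u}$ yields $S \circ U(D_{w,u}) = D_{w,u}$. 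Combining these gives $D_{w,u} = S \circ U(R_{w,u})$, and the formula for $H_{w,u}$ follows immediately by subtraction and division by $t^{l(u)}$.

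The only delicate point is the palindromic symmetry of $D_{w,u}$, which rests on relative Hard Lefschetz and therefore requires the chosen equivariant resolutions to be projective; this is automatic for the Bott--Samelson resolution (realized inside $(G/B)^l$) and for any sensible equivariant resolution of a Schubert variety. Everything else is bookkeeping with the perverse support conditions.
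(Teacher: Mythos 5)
Your proposal is correct and follows essentially the same route as the paper: take stalks of the decomposition at $x\in\Omega(u)$ to get $F_{w,u}=\sum_{u\leq v\leq w}D_{w,v}H_{v,u}$, isolate the extremal terms via $D_{w,w}=1$ and $H_{u,u}=t^{-l(u)}$, then separate $D_{w,u}$ from $t^{l(u)}H_{w,u}$ using the support conditions (negative-degree concentration) together with the relative Hard Lefschetz symmetry of $D_{w,u}$, and apply $U$ and $S$. Your explicit remark that the symmetry requires projectivity of the resolution is a sensible sharpening of the paper's appeal to \cite[Theorem 1.6.3]{dCaMi2009}, but it does not change the argument.
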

\begin{proof}
For the sake of simplicity, in the proof we set $\pi: \tilde X \rightarrow X$ instead of \par \noindent
$\pi_w: \tilde{X} (w)\rightarrow X(w)$.
By Proposition \ref{WK}, we have
\begin{equation}\label{decExp}
		R \pi _{*} \mathbb{Q}_{\tilde{X}} [l] =  \bigoplus_{v \leq w} \bigoplus_{\alpha \in \mathbb{Z}} IC_{X(v)}^{ \oplus s_{v, \alpha}^{w}} [- \alpha].
	\end{equation}

Consider a cell $\Omega(u)\subset X$, take  the stalk cohomology at $x\in \Omega(u)$ and recall \eqref{defD} and \eqref{defF}. From \eqref{decExp} we infer
	\begin{equation}\label{Eq:Eq1MainTh}
		F_{w, u}(t)= \sum_{u\leq v \leq w} D_{w, v}(t) \cdot H_{v, u}(t)=
	\end{equation}
	$$D_{w, w}(t) \cdot H_{w, u}(t)+ D_{w, u}(t) \cdot H_{u, u}(t)+ \sum_{u< v < w} D_{w, v}(t) \cdot H_{v, u}(t).$$
Since the resolution $\pi: \tilde X \rightarrow X$ is equivariant, it must be an ismorphism over $\Omega(w)$ and we have $D_{w, w}(t) =1$ (recall \eqref{decEx} and \eqref{defD}). Furthermore, from the well known isomorphism $IC_{X(u)}|_{\Omega(u)} \cong \mathbb{Q}_{\Omega(u)}[l(u)]$ 
(compare e.g. with \cite[Definition 6.3.1]{Max2019})
we deduce $H_{u, u}(t)=t^{-l(u)}$. Hence, from \eqref{Eq:Eq1MainTh} we get
\begin{equation}\label{Eq:Eq1MainTh}
		F_{w, u}(t)=  H_{w, u}(t)+ t^{-l(u)}\cdot D_{w, u}(t) + \sum_{u< v < w} D_{w, v}(t) \cdot H_{v, u}(t).
	\end{equation}

The last equality can be written as
	\begin{equation*}
		D_{w, u}(t) = t^{l(u)}\cdot \left( F_{w, u}(t) - \sum_{u< v < w} D_{w, v}(t) \cdot H_{v, u}(t)\right) - t^{l(u)} \cdot H_{w, u}(t) =  R_{w, u} - t^{l(u)} \cdot H_{w, u}(t).
	\end{equation*}
	The support conditions for perverse sheaves imply that $t^{l(u)} \cdot H_{w, u}(t)$ is concentrated in negative degrees (see \cite[p. 552, equation 12]{dCaMi2009}), thus
	\begin{equation*}
		U(D_{w, u}(t)) = U(R_{w, u}(t)).
	\end{equation*}
	Finally, the Laurent polynomials $D_{w, u}(t)$ are symmetric because of Hard-Lefschetz theorem (see \cite[Theorem 1.6.3]{dCaMi2009}), that is to say
	\begin{equation*}
		D_{w, u}(t)= D_{w, u}(t^{-1}).
	\end{equation*}
	Therefore we have
	\begin{equation}\label{eq:equazioni del teorema}
		\begin{cases*}
			D_{w, u}(t) = S \circ U(R_{w, u}(t))\\
			H_{w, u}(t) = t^{-l(u)}(R_{w, u}(t)-D_{w, u}(t))
		\end{cases*}
	\end{equation}
	and the statement follows.
\end{proof}

\medskip
By \eqref{KvsH}, the last theorem provides an iterative procedure to compute Kazhdan-Lusztig polynomials $P_{w, v}$ from Poincar\'e polynomials $Q_{w, v}$. 
To this end, let us introduce the following operators:
\begin{align*}
	U_{\beta}&: \sum_{\alpha \geq 0} c_{\alpha} t^{\alpha} \in \mathbb{Z} \left[ t \right] \mapsto \sum_{\alpha \geq \beta} c_{\alpha} t^{\alpha} \in \mathbb{Z} \left[ t \right], \hspace{0.33em} \forall \beta \geq 0.\\
\end{align*}

Next statement  follows from Theorem \ref{Th:Main} and collects all informations we obtained until now.

\begin{corollary}\label{cor:Utilde} Assume to have fixed
an equivariant resolution
$\pi_{w}: \tilde X (w) \rightarrow X(w),$ for any Schubert variety $X(w)$, $w\in W$.
For any pair $w, u$ in $W$ such that $u\leq w$, let $\tilde F_{w, u}$ be the Poincar\'e polynomials of  the fiber $\pi_w^{-1}(x)$, $\forall x\in \Omega(u)$. 
Then we have the following  recursive formulae:
	\begin{equation*}
		\begin{cases}
			\tilde D_{w, u} = t^{l(w)-l(u)} \circ S \circ t^{l(u)-l(w)} \circ  U_{l(w)-l(u)}(\tilde R_{w, u})\\
			\tilde H_{w, u} = \tilde R_{w, u} -\tilde D_{w, u}
		\end{cases}
	\end{equation*}
	where
	\begin{equation*}
\tilde D_{w, u}:= t^{l(w)-l(u)}  D_{w, u}, \quad \tilde H_{w, u}:= t^{l(w)}  H_{w, u}, \quad	\tilde R_{w, u} := \tilde F_{w, u} - \sum_{u < v < w} \tilde D_{w, v} \cdot \tilde H_{v, u}. 
	\end{equation*}
\end{corollary}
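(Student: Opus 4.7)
The plan is to derive Corollary~\ref{cor:Utilde} directly from Theorem~\ref{Th:Main} by tracking how the tilde-normalisation rescales each polynomial. First, I would record the basic scalings $\tilde F_{w,u} = t^{l(w)} F_{w,u}$, $\tilde D_{w,v} = t^{l(w)-l(v)} D_{w,v}$ and $\tilde H_{v,u} = t^{l(v)} H_{v,u}$. A one-line check then shows $\tilde D_{w,v}\cdot \tilde H_{v,u} = t^{l(w)}\, D_{w,v}\, H_{v,u}$, because the intermediate $l(v)$ cancels and every term of the sum carries the same factor $t^{l(w)}$. Consequently $\tilde R_{w,u} = t^{l(w)-l(u)} R_{w,u}$, which is the key identity linking the two normalisations.

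Once this identity is in hand, the second equation of the corollary is essentially immediate from Theorem~\ref{Th:Main}: multiplying $H_{w,u} = t^{-l(u)}(R_{w,u} - D_{w,u})$ by $t^{l(w)}$ gives
\[
\tilde H_{w,u} = t^{l(w)-l(u)}\bigl(R_{w,u} - D_{w,u}\bigr) = \tilde R_{w,u} - \tilde D_{w,u},
\]
as required.

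For the first equation the only real step is to understand how $U$ is conjugated by multiplication by $t^{l(w)-l(u)}$. Since $U$ retains nonnegative powers and $U_{\beta}$ retains powers $\geq\beta$, one has the operator identity $U_{l(w)-l(u)} \circ t^{l(w)-l(u)} = t^{l(w)-l(u)} \circ U$ on Laurent polynomials. Applying this to $R_{w,u}$ yields $U(R_{w,u}) = t^{l(u)-l(w)}\, U_{l(w)-l(u)}(\tilde R_{w,u})$, so starting from $D_{w,u} = S \circ U(R_{w,u})$ and multiplying by $t^{l(w)-l(u)}$ produces exactly
\[
\tilde D_{w,u} = t^{l(w)-l(u)} \circ S \circ t^{l(u)-l(w)} \circ U_{l(w)-l(u)}(\tilde R_{w,u}).
\]

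I do not anticipate a genuine obstacle: once the scaling relation $\tilde R_{w,u} = t^{l(w)-l(u)} R_{w,u}$ is established, the rest is a bookkeeping exercise. The only point requiring a bit of care is that $S$ must be applied to a polynomial in $\mathbb{Z}[t]$, which is guaranteed here because $U_{l(w)-l(u)}(\tilde R_{w,u})$ has minimum degree $\geq l(w)-l(u)$, so multiplying by $t^{l(u)-l(w)}$ gives a genuine element of $\mathbb{Z}[t]$ before $S$ is applied.
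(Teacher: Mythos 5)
Your proposal is correct and follows essentially the same route as the paper: establish the scaling identity $\tilde R_{w,u}=t^{l(w)-l(u)}R_{w,u}$ and then transport the formulas of Theorem \ref{Th:Main} through the conjugation identity relating $U$ and $U_{l(w)-l(u)}$ (the paper states this as $t^{l(u)-l(w)}\circ U_{l(w)-l(u)}\circ t^{l(w)-l(u)}=U_0$). Your write-up is in fact slightly cleaner, since the paper's displayed computation contains an evident typo ($t^{l(w)-l(v)}$ where $t^{l(w)-l(u)}$ is meant), which your bookkeeping gets right.
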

\begin{proof}
From \eqref{defF} we find $\tilde F_{w, u}=t^{l(w)}F_{w, u}$. Thus we have 
$$	\tilde R_{w, u} := \tilde F_{w, u} - \sum_{u < v < w} \tilde D_{w, v} \cdot \tilde H_{v, u}= 	t^{l(w)} F_{w, u} - \sum_{u < v < w}  t^{l(w)-l(u)}D_{w, v} \cdot t^{l(v)} H_{v, u}=t^{l(w)-l(v)} R_{w, u},$$
and the statement  straightforwardly follows just
combining Theorem \ref{Th:Main} with
$$ t^{l(u)-l(w)} \circ U_{l(w)-l(u)}\circ t^{l(w)-l(v)}= U_0.$$
\end{proof}
\bigskip

\begin{remark}\label{rmkP}
By \eqref{KvsH}, we have
$$P_{w, v}(q)= q^{\frac{l(w)}{2}}H_{w, v}(\sqrt q)=\tilde H_{w, v}(\sqrt q),  $$
hence previous corollary provides an iterative procedure that allows to compute  \textbf{both} the Kazhdan-Lusztig polynomials $P_{w, v}$ and the Laurent polynomials $D_{w,  v}$.
\end{remark}

\bigskip

\section{Bases for the Hecke algebra}

\medskip
As in the previous section we assume to have fixed an equivariant resolution
$\pi_{w}$ for any Schubert variety $X(w)$, $w\in W$ and we assume in addition
that the cohomology of the fibers of $\pi_w$ vanish in odd degrees (this property is satisfied by any reasonable resolution of Schubert varieties). As a consequence, we have that the coefficients of the polynomials
$\tilde F_{w, v}$ vanish in odd degree.
Furthermore, from the relations
\begin{equation*}
		\begin{cases}
			\tilde D_{w, u} = t^{l(w)-l(u)} \circ S \circ t^{l(u)-l(w)} \circ  U_{l(w)-l(u)}(\tilde R_{w, u})\\
			\tilde R_{w, u} := \tilde F_{w, u} - \sum_{u < v < w} \tilde D_{w, v} \cdot \tilde H_{v, u}
		\end{cases}
	\end{equation*}
one deduces immediately that the same holds true for the polynomials $\tilde D_{w, v}$. We define
\begin{equation}\label{Q}
Q_{w, v}(q)=\tilde F_{w, v}(\sqrt q)  \in \Z[q], 
\end{equation}
\begin{equation}\label{S}
S_{w, v}(q)=\tilde D_{w, v}(\sqrt q)  \in \Z[q], 
\end{equation}
for all $v, w \in W$ such that $v\leq w$.
Our aim in this section is to define a new basis for the Hecke algebra by means of the polynomials $Q_{w, v}(q)$. We start by recalling the definition of Hecke algebra.

Let $\hc$ be the \textit{Hecke algebra} of $W$ i.e. the algebra over $\Zq$ with basis elements 
\par\noindent $\{T_w\mid \,\, w\in W \}$ and relations \cite[Sec. 6.1]{BillLak}

\begin{equation*}
		\begin{cases}
	 		T_{s_i}T_w=T_{s_iw} \quad \text{if} \quad l(s_iw)>l(w),\\
			T_{s_i} T_{s_i}=(q-1) T_{s_i}+q T_{id}.
		\end{cases}
	\end{equation*}
The Hecke algebra is also equipped with the Kazhdan-Lusztig basis $\{C_w\mid \,\, w\in W \}$
where
\begin{equation}\label{KLbasis}
C_w= T_w + \sum_{v<w}P_{w, v}T_v
\end{equation}
where $P_{w, v}\in \Z[q]$ are the Kazhdan-Lusztig polynomials.

\begin{theorem}
 For any $w\in W$, let 
$$B_w= T_w + \sum_{v<w}Q_{w, v}T_v\in \hc.  $$ 
The polynomials $S_{w, v}(q)\in \Z[q]$ are the coefficients of $B_w$ with respect to the Kazhdan-Lusztig basis
$$B_w= C_w + \sum_{v<w}S_{w, v}C_v\in \hc.  $$

\end{theorem}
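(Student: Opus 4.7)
The plan is to reduce the identity of Hecke-algebra expansions to the scalar identity
\[
Q_{w,u}(q) \;=\; \sum_{u\leq v\leq w} S_{w,v}(q)\,P_{v,u}(q),
\]
which is already essentially contained in the proof of Theorem \ref{Th:Main}. More precisely, setting $D_{w,w}=1$, $H_{u,u}(t)=t^{-l(u)}$, $P_{u,u}=1$, taking stalks of the decomposition in Proposition \ref{WK} at a point of $\Omega(u)$ yields
\[
F_{w,u}(t) \;=\; \sum_{u\leq v\leq w} D_{w,v}(t)\,H_{v,u}(t).
\]
Multiplying both sides by $t^{l(w)}$ and regrouping the shifts as $t^{l(w)-l(v)}D_{w,v}\cdot t^{l(v)}H_{v,u}$ gives
\[
\tilde F_{w,u}(t) \;=\; \sum_{u\leq v\leq w}\tilde D_{w,v}(t)\,\tilde H_{v,u}(t),
\]
and substituting $t=\sqrt q$ together with \eqref{KvsH}, \eqref{Q} and \eqref{S} produces the displayed scalar identity.

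Next I would simply expand the candidate expression using the defining formula \eqref{KLbasis} for the Kazhdan--Lusztig basis:
\[
C_w + \sum_{v<w} S_{w,v}C_v \;=\; T_w + \sum_{u<w} P_{w,u}T_u + \sum_{v<w} S_{w,v}\Bigl(T_v + \sum_{u<v}P_{v,u}T_u\Bigr).
\]
Collecting the coefficient of $T_u$ (for $u<w$) on the right-hand side and remembering that $S_{w,w}=1$ and $P_{u,u}=1$, this coefficient equals
\[
P_{w,u} + S_{w,u} + \sum_{u<v<w} S_{w,v}P_{v,u} \;=\; \sum_{u\leq v\leq w} S_{w,v}P_{v,u}.
\]
By the scalar identity above, this is precisely $Q_{w,u}$. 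The coefficient of $T_w$ is $1$ on both sides. Hence $C_w+\sum_{v<w}S_{w,v}C_v = T_w+\sum_{v<w}Q_{w,v}T_v = B_w$, which is the claim.

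There is essentially no serious obstacle: the argument is a pure change-of-basis bookkeeping once one has the convolution identity $Q = S\ast P$. The only subtle point to keep track of is the shifting conventions that translate between the perverse-sheaf Laurent polynomials $\tilde D,\tilde H,\tilde F$ in the variable $t$ and the Hecke-algebra polynomials $S,P,Q$ in the variable $q=t^2$; once one verifies that these shifts cancel so that the endpoints $S_{w,w}=1$ and $P_{u,u}=1$ match the boundary terms from $D_{w,w}=1$ and $t^{l(u)}H_{u,u}=1$, the proof is complete.
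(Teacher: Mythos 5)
Your proposal is correct and follows essentially the same route as the paper: both rest on the convolution identity $Q_{w,u}=\sum_{u\leq v\leq w}S_{w,v}\,P_{v,u}$ obtained by taking stalks in Proposition \ref{WK}, shifting by $t^{l(w)}$, and evaluating at $t=\sqrt q$, together with the boundary values $S_{w,w}=Q_{w,w}=P_{u,u}=1$. The only difference is cosmetic: you expand $C_w+\sum_{v<w}S_{w,v}C_v$ in the standard basis and compare coefficients of $T_u$, whereas the paper starts from $B_w$ and regroups the double sum, which is the same bookkeeping read in the opposite direction.
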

\begin{proof}
From \eqref{Eq:Eq1MainTh} and recalling $\tilde F_{w, u}=t^{l(w)}F_{w, u}$, we get
\begin{equation}\label{Eq:Eq1Th}
		\tilde F_{w, u}=t^{l(w)}F_{w, u}= \sum_{u\leq v \leq w} t^{l(w)-l(v)}D_{w, v} \cdot  t^{l(v)}H_{v, u}= \sum_{u\leq v \leq w} \tilde D_{w, v} \cdot  \tilde H_{v, u},
	\end{equation}
	where we have taken into account  \ref{cor:Utilde}.
Combining \ref{rmkP} with 	\eqref{Q} and \eqref{S} and evaluating in $t=\sqrt q$ the last equality, we get
\begin{equation}\label{Eq:Eq2Th}
		Q_{w, u}		= \sum_{u\leq v \leq w} S_{w, v}\cdot P_{v, u}\in \Z[q].
\end{equation}
Since the resolution $\pi: \tilde X \rightarrow X$ is equivariant, it must be an ismorphism over $\Omega(w)$, so we have $Q_{w, w}=1$ and 
$$B_w= T_w + \sum_{u<w}Q_{w, u}T_u=  \sum_{u\leq w}Q_{w, u}T_u=\sum_{u\leq w}\left(\sum_{u\leq v \leq w} S_{w, v}\cdot P_{v, u}\right)T_u=\sum_{v\leq w}S_{w, v}\left(\sum_{u\leq v }  P_{v, u}T_u \right).$$
Again, since $\pi_w$ an ismorphism over $\Omega(w)$ and we have $D_{w, w}= S_{w, w}=1$ (recall \eqref{decEx} and \eqref{defD}) and 
the last sum can be written as
$$\sum_{u\leq w }  P_{w, u}T_u+ \sum_{v< w}S_{w, v}\left(\sum_{u\leq v }  P_{v, u}T_u \right)$$
that coincides with
$$C_w + \sum_{v<w}S_{w, v}C_v$$
in view of \eqref{KLbasis}.

\end{proof}

\begin{remark}\label{last}
Theorem above shows that the transition matrix, from the   Kazhdan-Lusztig basis $\{C_w \mid  w\in W\}$ to the new one $\{B_w \mid  w\in W\}$, is triangular with coefficients $S_{w,  v}\in \Z[q]$. In this work we have set up an iterative procedure that allows
the computation of such a matrix and which \textit{does not require prior knowledge of the transition matrix from the Kazhdan-Lusztig basis to the standard one} $\{T_w \mid  w\in W\}$.
\end{remark}

\bigskip

\textbf{Conflicts of interest}

\medskip
The author has no conflict of interest to declare that are relevant to this article.

\end{document}